\newcommand{\R}{\mathbb{R}}
\newcommand{\N}{\mathbb{N}}
\newcommand{\Z}{\mathbb{Z}}
\numberwithin{equation}{section}
\newcommand{\ave}[1]{\langle #1\rangle}
\newcommand{\BMO}[0]{\operatorname{BMO}}
\newcommand{\D}[0]{\mathcal{D}}
\theoremstyle{plain}
\newtheorem{thm}[equation]{Theorem}
\newtheorem{lem}[equation]{Lemma}
\newtheorem{prop}[equation]{Proposition}
\newtheorem{cor}[equation]{Corollary}
\theoremstyle{definition}
\newtheorem{exmp}[equation]{Example}
\theoremstyle{remark}
\author{Henri Martikainen}
\address[H.M.]{Department of Mathematics and Statistics, University of Helsinki, P.O.B. 68, FI-00014 Helsinki, Finland}
\email{henri.martikainen@helsinki.fi}
\thanks{Research of H.M. is supported by the Academy of Finland through the grant
Multiparameter dyadic harmonic analysis and probabilistic methods. }
\author{Tuomas Orponen}
\address[T.O.]{School of Mathematics, University of Edinburgh, James Clerk Maxwell Building, King's Buildings, Mayfield Road, Edinburgh, EH9 3JZ, Scotland}
\email{tuomas.orponen@helsinki.fi}
\thanks{Research of T.O. is supported by the Finnish foundation Jenny and Antti Wihurin Rahasto.}
\subjclass[2010]{42B20}
\keywords{Bi-parameter, paraproduct, unconditionality}
\title[Obstacles in characterising the boundedness of bi-parameter singular integrals]{Some obstacles in characterising the boundedness of bi-parameter singular integrals}
\begin{document}

\begin{abstract}
The famous $T1$ theorem for classical Calder\'on--Zygmund operators is a characterisation for their boundedness in $L^{2}$. In the bi-parameter case, on the other hand, the current
$T1$ theorem is merely a collection of sufficient conditions. This difference in mind, we study a particular
dyadic bi-parameter singular integral operator, namely the \emph{full mixed bi-parameter paraproduct} $P$, which is precisely the operator responsible for the outstanding problems
in the bi-parameter theory. We make several remarks about $P$, the common theme of which is to demonstrate the delicacy of the problem of finding a completely
satisfactory product $T1$ theorem. For example, $P$ need not be unconditionally bounded if it is conditionally bounded -- 
a major difference compared to the corresponding one-parameter model operators. Moreover, currently the theory even lacks a characterisation for the potentially easier unconditional boundedness. The product BMO condition is sufficient, but far from necessary: we show by example that unconditional boundedness does not even imply the weaker rectangular BMO condition.
\end{abstract}
\maketitle

\section{Introduction}
Our subject concerns bi-parameter Calder\'on--Zygmund theory, and the limitations it has compared to the one-parameter case.
A standard Calder\'on--Zygmund operator $T\colon L^2(\R^n) \to L^2(\R^n)$ can be viewed as an average of dyadic model operators (a result in this generality
proved by Hyt\"onen \cite{Hy}).
Among these dyadic model operators two special ones stand out: a dyadic paraproduct and a dual paraproduct associated with $T1$ and $T^*1$ respectively.
The rest are just very nice cancellative Haar shifts. The boundedness of a dyadic paraproduct is equivalent to the symbol belonging to BMO.
One way to establish the boundedness of $T$ is via the boundedness of these model operators. This then forces the assumption $T1, T^*1 \in \BMO$.
 But the one-parameter theory is unproblematic since if $T$ is bounded, then $T1$ and $T^*1$ belong to BMO. Indeed, the classical $T1$ theorem is a characterisation for the boundedness.
 Moreover, for all these one-parameter model operators boundedness is the same as unconditional boundedness.

Recently, the first named author proved a dyadic representation theorem for product singular integrals \cite{Ma}. The representation is more complicated than in the one-parameter case,
but the essential part in regard to the present paper is that the product BMO assumptions are tied to certain full paraproducts. Here "full" refers to the fact that also half paraproducts, which essentially have a paraproduct part
in only one of the parameters, appear. They do not concern us here, since they are not tied to the product BMO assumptions.
The full bi-parameter paraproducts come in two flavours: the standard one, and the mixed one (and their duals). The boundedness properties of the former operator are easy -- the boundedness is
characterised by the product BMO. The latter one is an evil twin of the first one: it is a very delicate operator and behind the limitations of the
product $T1$ theory. We aim to make this point very explicit in this note.

The limitation we refer to is the well-known handicap of the product theory that
the $T1$ theorem is just a collection of assumptions which guarantee the boundedness, but do not characterise it. Let us elaborate on this.
Following the above one-parameter story, one way to establish a product $T1$ is via the boundedness of the dyadic bi-parameter model operators.
The standard paraproducts are tied to the symbols $T1$ and $T^*1$. Like mentioned, the boundedness of the standard paraproduct is completely characterised by the product BMO condition.
This forces the assumption $T1, T^*1 \in \BMO_{\textup{prod}}$. Despite the complicated nature of the product BMO space, this condition is necessary for the boundedness
of $T$ -- this uses the famous covering lemma of Journ\'e \cite{Jo1}.

The mixed paraproduct (there is also the dual mixed paraproduct)
is tied to $T_1(1)$, where $T_1$ is the partial adjoint of $T$ i.e. $\langle T(f_1 \otimes g_1), f_2 \otimes g_2\rangle = \langle T_1(f_2 \otimes g_1), f_1 \otimes g_2\rangle$.
While $T_1$ is again a bi-parameter singular integral operator, the problem is that the boundedness of $T$ does not in general imply the boundedness of $T_1$.
So the assumption $T_1(1) \in \BMO_{\textup{prod}}$ is no longer a necessary condition for the boundedness of $T$. 
Nevertheless, this assumption is made and the known fact that the product BMO is a sufficient condition for the unconditional boundedness of the mixed paraproduct
is used. This means that the current product $T1$ theorems are a simultaneous characterisation for the boundedness of $T$ and $T_1$. One would, of course, prefer to characterise just the boundedness of $T$ alone.

Product BMO is sufficient for the boundedness of the mixed paraproduct, and because of the unconditional nature of the product BMO condition, it actually implies the \textbf{unconditional} boundedness.
But the gist is, as we will see, that the mixed paraproduct can be bounded, while being unconditionally unbounded. This implies that a BMO type
condition has no chance of characterising the boundedness of the mixed paraproduct. In fact, it turns out that product BMO is also overkill for unconditional boundedness: we construct an example showing that even the weaker rectangular BMO condition is not necessary for the unconditional boundedness of the mixed paraproduct. 

For harmonic analysis, singular integrals, and classical function spaces in the product (or multi-parameter) setting we refer to the works of Chang and Fefferman \cite{CF},
Fefferman \cite{Fe} and Fefferman and Stein \cite{FS}. As we have mentioned, the natural context for our results is offered by the boundedness criteria for product singular integrals. The first $T1$ theorem
in the product setting is due to Journ\'e \cite{Jo2}. Then there is the route via dyadic bi-parameter model operators \cite{Ma}.
These techniques were also extended to prove a non-homogeneous product $T1$ jointly with Hyt\"onen \cite{HM}.
For other related dyadic product methods and results we refer to the papers of Blasco and Pott \cite{BP}, Ou \cite{Ou}, Pipher and Ward \cite{PW}, and Treil \cite{Tr}.

Let us now explicitly introduce our object of study. It is the dyadic operator defined by the 
bilinear form
\begin{displaymath}
P_{\lambda}(f, g) := \sum_{I,J} \lambda_{IJ} \Big\langle f, h_I \otimes \frac{1_J}{|J|} \Big\rangle \Big\langle g, \frac{1_I}{|I|} \otimes h_J\Big\rangle,
\end{displaymath}
where $\lambda = (\lambda_{IJ})$ is an arbitrary sequence of reals indexed by dyadic rectangles $I \times J$, and $h_I$ denotes an $L^2$ normalised Haar function with zero mean.
The delicacy of finding an optimal product $T1$ theorem is already exhibited by this specific dyadic bi-parameter singular integral. On the other hand, results about this operator
can be transferred to continuous bi-parameter singular integrals via the representation theorem.

So, we wish to demonstrate that characterising the $L^{2}$ boundedness -- both conditional and unconditional -- of $P_{\lambda}$ in terms of the defining sequence $(\lambda_{IJ})_{IJ}$ alone is hard.
A very special class of paraproducts will already do for this task.
Indeed, we build our examples using \emph{space-independent paraproducts} i.e. those for which $\lambda_{IJ} = \lambda_{ij}$ if $(|I|,|J|) = (2^{-i},2^{-j})$. For these special paraproducts, we find that the $L^{2}$ problem is equivalent to the problem of characterising the $\ell^{2}$-boundedness of an arbitrary infinite matrix in terms of its entries -- to which we are not aware of any simple solution. The stated equivalence is the content of Theorem \ref{thm:mainNew} below.

We restrict attention to the notationally simplest case $\R \times \R$. Let $\mathcal{M}$ denote the space of all infinite matrices indexed by $\N \times \N$, and let $\Lambda$ denote the space of all sequences $(\lambda_{IJ})_{IJ}$ indexed by the dyadic rectangles $I \times J \subset \R \times \R$. Define a mapping $L \colon \mathcal{M} \to \Lambda$ by 
\begin{displaymath} L(A)_{IJ} = 2^{-(i+j)/2}A^{ij}, \quad \text{if } I \times J \subset [0,1)^{2} \text{ and } (|I|,|J|) = (2^{-i},2^{-j}). \end{displaymath}
For dyadic rectangles $I \times J \not\subset [0,1)^{2}$, set $L(A)_{IJ} = 0$.
\begin{thm}\label{thm:mainNew} The matrix $A$ is bounded on $\ell^{2}_{\N}$, if and only if the operator $P_{\lambda} = P_{L(A)}$ induced by the sequence $\lambda_{IJ} = L(A)_{IJ}$ is bounded on $L^{2}(\R^2)$. In fact, $\|A\|_{2 \to 2} = \|P_{L(A)}\|_{2 \to 2}$.
\end{thm}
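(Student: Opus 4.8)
The plan is to compute both operator norms by hand and reduce everything to the elementary fact that $\|A\otimes I_{\mathcal K}\|=\|A\|_{2\to2}$ for the ampliation of $A$ by any Hilbert space $\mathcal K$. The starting point is a rewriting of the bilinear form. Since $L(A)_{IJ}=0$ unless $I\times J\subset[0,1)^{2}$, only dyadic $I,J\subset[0,1)$ enter, and for these the normalisation $2^{-(i+j)/2}$ is exactly what converts the $L^{1}$-normalised averages $\tfrac{1_{I}}{|I|},\tfrac{1_{J}}{|J|}$ into the $L^{2}$-normalised indicators $\tilde 1_{K}:=|K|^{-1/2}1_{K}$. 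A direct computation then gives
\[ P_{L(A)}(f,g)=\sum_{i,j}A^{ij}\,\bigl\langle\tilde f_{i},\tilde g_{j}\bigr\rangle_{L^{2}(\R^{2})},\qquad \tilde f_{i}:=\!\!\sum_{\substack{I\subset[0,1)\\ |I|=2^{-i}}}\!\!\tilde 1_{I}\otimes\langle f,h_{I}\rangle_{1},\quad \tilde g_{j}:=\!\!\sum_{\substack{J\subset[0,1)\\ |J|=2^{-j}}}\!\!\langle g,h_{J}\rangle_{2}\otimes\tilde 1_{J}, \]
where $\langle f,h_{I}\rangle_{1}$ and $\langle g,h_{J}\rangle_{2}$ are the Haar coefficients of $f$ (resp.\ $g$) in the first (resp.\ second) variable, viewed as functions of the remaining variable. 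Because the $\tilde 1_{I}$ with $|I|=2^{-i}$ are orthonormal (disjoint supports, unit norm) and the Haar system is complete in one variable, this already yields the Bessel-type bounds $\sum_{i}\|\tilde f_{i}\|^{2}\le\|f\|^{2}$ and $\sum_{j}\|\tilde g_{j}\|^{2}\le\|g\|^{2}$.

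\emph{Upper bound.} Regard $(\tilde f_{i})_{i}$ and $(\tilde g_{j})_{j}$ as elements of $\ell^{2}\otimes L^{2}(\R^{2})$ (legitimate by the Bessel bounds) and read the displayed identity as $P_{L(A)}(f,g)=\bigl\langle(\tilde f_{i})_{i},\,(A\otimes I_{L^{2}(\R^{2})})(\tilde g_{j})_{j}\bigr\rangle$. Cauchy--Schwarz, the identity $\|A\otimes I_{L^{2}(\R^{2})}\|=\|A\|_{2\to2}$, and the Bessel bounds give $|P_{L(A)}(f,g)|\le\|A\|_{2\to2}\|f\|\|g\|$, hence $\|P_{L(A)}\|_{2\to2}\le\|A\|_{2\to2}$ (trivial if $\|A\|_{2\to2}=\infty$).

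\emph{Lower bound.} Given finitely supported unit vectors $\xi,\eta\in\ell^{2}$, put $i_{0}:=\min\operatorname{supp}\xi$, $j_{0}:=\min\operatorname{supp}\eta$, $I^{*}:=[0,2^{-i_{0}})$, $J^{*}:=[0,2^{-j_{0}})$ and test on
\[ f:=\Bigl(\sum_{i}\xi_{i}2^{-(i-i_{0})/2}\!\!\sum_{\substack{I\subseteq I^{*}\\ |I|=2^{-i}}}\!\!h_{I}\Bigr)\otimes\tilde 1_{J^{*}},\qquad g:=\tilde 1_{I^{*}}\otimes\Bigl(\sum_{j}\eta_{j}2^{-(j-j_{0})/2}\!\!\sum_{\substack{J\subseteq J^{*}\\ |J|=2^{-j}}}\!\!h_{J}\Bigr). \]
By orthonormality of the Haar system $\|f\|=\|g\|=1$, and the telescoping identity $\sum_{I\subseteq I^{*},\,|I|=2^{-i}}\tilde 1_{I}=2^{(i-i_{0})/2}\tilde 1_{I^{*}}$ shows $\tilde f_{i}=\xi_{i}\,\tilde 1_{I^{*}}\otimes\tilde 1_{J^{*}}$ and $\tilde g_{j}=\eta_{j}\,\tilde 1_{I^{*}}\otimes\tilde 1_{J^{*}}$, so that $P_{L(A)}(f,g)=\sum_{i,j}A^{ij}\xi_{i}\eta_{j}$. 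Taking the supremum over all such $\xi,\eta$ (dense in the unit balls of $\ell^{2}$) gives $\|P_{L(A)}\|_{2\to2}\ge\|A\|_{2\to2}$, including the case $\|A\|_{2\to2}=\infty$. Combined with the upper bound this gives $\|P_{L(A)}\|_{2\to2}=\|A\|_{2\to2}$, which in particular contains the stated equivalence.

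The whole argument rests on the single identity $P_{L(A)}(f,g)=\sum_{i,j}A^{ij}\langle\tilde f_{i},\tilde g_{j}\rangle$ together with the one-variable Bessel control $\sum_{i}\|\tilde f_{i}\|^{2}\le\|f\|^{2}$; once these are in hand, both inequalities follow immediately from $\|A\otimes I\|=\|A\|$, and I do not expect a genuine obstacle. The only points needing care are the bookkeeping of the normalisation (checking that the factor $2^{-(i+j)/2}$ in the definition of $L$ is exactly the one that clears all powers of $2$) and the observation that, since $(\tilde f_{i})_{i},(\tilde g_{j})_{j}\in\ell^{2}\otimes L^{2}(\R^{2})$ for every $f,g\in L^{2}(\R^{2})$, the series defining $P_{L(A)}(f,g)$ converge absolutely whenever $\|A\|_{2\to2}<\infty$, so that no delicate interpretation of the operator is required.
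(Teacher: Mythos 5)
Your proof is correct, and it is essentially the paper's argument in different packaging: the identity $P_{L(A)}(f,g)=\bigl\langle(\tilde f_{i})_{i},(A\otimes I_{L^{2}})(\tilde g_{j})_{j}\bigr\rangle$ together with $\|A\otimes I\|=\|A\|_{2\to2}$ is exactly Proposition \ref{Mlambda} specialised to the case $M_{L(A)}(x,y)\equiv A$ on $[0,1)^{2}$ (your $\tilde f_{i}$ is the paper's $a_{i}(x,y)=2^{i/2}u_{I_{i}(x)}(y)$ with $u_{I}=\langle f,h_{I}\rangle_{1}$), and your lower-bound test functions, though localised to $I^{*}\times J^{*}$ rather than to $[0,1)^{2}$, play the same role as the paper's constant choices $u_{I}=2^{-i/2}a_{i}1_{[0,1)}$, $g_{J}=2^{-j/2}b_{j}1_{[0,1)}$. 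One caveat on your closing remark: it is \emph{not} true that the series defining $P_{L(A)}(f,g)$ converges absolutely whenever $\|A\|_{2\to2}<\infty$ --- absolute convergence is precisely the unconditional ($X'$) issue that Corollary \ref{cor:uc} shows can fail even for bounded $A$ --- but this does not damage the proof, since the norm $\|P_{\lambda}\|_{2\to2}$ is defined as a supremum over the dense classes $A,B\subset L^{2}(\R^{2})$ of Section \ref{preliminaries}, on which all the sums in question are finite and your regrouping is legitimate.
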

Note that $P_{L(A)}$ is always space-independent. We list some corollaries.
\begin{cor} The problem of characterising the $L^{2}$-boundedness of a general paraproduct $P_{\lambda}$ in terms of the sequence $(\lambda_{IJ})_{IJ}$ is at least as difficult as the problem of characterising the $\ell^{2}$-boundedness of a general matrix $A \in \mathcal{M}$ in terms of its entries.
\end{cor}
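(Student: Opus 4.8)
The plan is to compute the $L^2$ operator norm of $P_{L(A)}$ directly, and show it equals $\|A\|_{2\to2}$.

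The plan is to establish the stronger quantitative claim $\|A\|_{2\to 2} = \|P_{L(A)}\|_{2\to 2}$ by hand; the equivalence in the theorem then follows, with the understanding that both sides are simultaneously $+\infty$. It suffices to work with $f,g$ in the linear span of tensor products of Haar functions and the constant $1_{[0,1)}$ (harmlessly supported in $[0,1)^{2}$), passing to the general case by density at the end. Write $\widetilde{1}_{K} := |K|^{-1/2}1_{K}$. The point of departure is that the weight $2^{-(i+j)/2}$ built into $L$ is exactly the one cancelling the normalising constants in $P$: since $1_{J}/|J| = 2^{j/2}\widetilde{1}_{J}$ and $1_{I}/|I| = 2^{i/2}\widetilde{1}_{I}$, the bilinear form becomes
\begin{displaymath}
P_{L(A)}(f,g) = \sum_{I\times J\subset [0,1)^{2}} A^{ij}\, \langle f, h_{I}\otimes \widetilde{1}_{J}\rangle\, \langle g, \widetilde{1}_{I}\otimes h_{J}\rangle, \qquad (|I|,|J|) = (2^{-i},2^{-j}).
\end{displaymath}
Now introduce the first-variable operator $R_{i}$ defined by $(R_{i}f)(x,y) = \sum_{|I| = 2^{-i}}\langle f(\cdot,y),h_{I}\rangle\,\widetilde{1}_{I}(x)$ (Haar coefficients in, normalised averages out), and the second-variable operator $S_{j}$ defined by $(S_{j}g)(x,y) = \sum_{|J| = 2^{-j}}\langle g(x,\cdot),\widetilde{1}_{J}\rangle\,h_{J}(y)$ (averages in, Haar out), all intervals contained in $[0,1)$. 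Testing on products $f=\phi\otimes\psi$ and extending by bilinearity, the display above says precisely $P_{L(A)}(f,g) = \sum_{i,j} A^{ij}\langle S_{j}R_{i}f, g\rangle$, i.e. $P_{L(A)} = \sum_{i,j}A^{ij}S_{j}R_{i}$ (and $R_{i},S_{j}$ commute, acting on different variables).

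For the upper bound I would exploit the asymmetric orthogonality hidden in this factorisation, via three elementary facts: (a) $\sum_{i}\|R_{i}f\|_{2}^{2}\le \|f\|_{2}^{2}$, which is Bessel's inequality for the Haar system in the first variable (the $\widetilde{1}_{I}$ with $|I|=2^{-i}$ being orthonormal in $x$ for fixed $i$); (b) each $S_{j}$ is a contraction on $L^{2}$, which is Bessel for the orthonormal family $\{\widetilde{1}_{J} : |J| = 2^{-j}\}$ in the second variable; and (c) the ranges of the $S_{j}$ lie in the pairwise orthogonal subspaces $L^{2}_{x}\otimes W_{j}$, where $W_{j} := \lspan\{h_{J} : |J| = 2^{-j}\}$, the orthogonality of the $W_{j}$ being Haar orthogonality. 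Writing $P_{L(A)}f = \sum_{j}S_{j}\big(\sum_{i}A^{ij}R_{i}f\big)$ and using (c) then (b),
\begin{displaymath}
\|P_{L(A)}f\|_{2}^{2} = \sum_{j}\Big\|S_{j}\Big(\sum_{i}A^{ij}R_{i}f\Big)\Big\|_{2}^{2} \le \sum_{j}\Big\|\sum_{i}A^{ij}R_{i}f\Big\|_{2}^{2} \le \|A\|_{2\to 2}^{2}\sum_{i}\|R_{i}f\|_{2}^{2} \le \|A\|_{2\to 2}^{2}\|f\|_{2}^{2},
\end{displaymath}
where the middle inequality is the fact that $A$, acting coordinatewise on $\ell^{2}$-sequences of $L^{2}$-functions, still has norm $\|A\|_{2\to 2}$ (expand each $R_{i}f$ in a fixed orthonormal basis of $L^{2}$), and the last inequality is (a). Hence $\|P_{L(A)}\|_{2\to 2}\le \|A\|_{2\to 2}$.

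For the matching lower bound, given finitely supported unit vectors $\xi,\eta\in \ell^{2}_{\N}$, I would produce unit functions $f,g$ with $P_{L(A)}(f,g) = \sum_{i,j}A^{ij}\xi_{i}\eta_{j}$. The device is to spread each coefficient evenly across all intervals of a given scale: set $f := \phi\otimes 1_{[0,1)}$ with $\phi := \sum_{i}2^{-i/2}\xi_{i}\sum_{|I| = 2^{-i}}h_{I}$ and $g := 1_{[0,1)}\otimes\beta$ with $\beta := \sum_{j}2^{-j/2}\eta_{j}\sum_{|J| = 2^{-j}}h_{J}$. Then $\|\phi\|_{2}^{2} = \sum_{i}2^{i}\cdot 2^{-i}|\xi_{i}|^{2} = 1$ and likewise $\|\beta\|_{2} = 1$, so $\|f\|_{2} = \|g\|_{2} = 1$. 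Since $\langle 1_{[0,1)}, 1_{J}/|J|\rangle = 1 = \langle 1_{[0,1)}, 1_{I}/|I|\rangle$, only the factors $\langle\phi,h_{I}\rangle = 2^{-i/2}\xi_{i}$ and $\langle\beta,h_{J}\rangle = 2^{-j/2}\eta_{j}$ survive in the formula for $P_{L(A)}(f,g)$, and summing over the $2^{i}\cdot 2^{j}$ rectangles at scale $(i,j)$ collapses it to $\sum_{i,j}A^{ij}\xi_{i}\eta_{j}$. Taking the supremum over admissible $\xi,\eta$ gives $\|P_{L(A)}\|_{2\to 2}\ge \|A\|_{2\to 2}$, and with the previous paragraph this proves the theorem; the corollary is then immediate, since a characterisation of the $L^{2}$-boundedness of a general $P_{\lambda}$ in terms of $(\lambda_{IJ})$ would, restricted to $\lambda = L(A)$, characterise the $\ell^{2}$-boundedness of $A$ in terms of its entries.

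There is no single deep obstacle here; the delicate feature is that the conclusion is an \emph{exact} equality of operator norms, not a two-sided estimate, and this is what dictates the precise normalisation in $L$, the precise test functions (equal spreading over each scale), and the use of honest — not merely almost — orthogonality in step (c). The only place demanding real care is the bookkeeping of the powers of $2$ in the opening identity, matching $1_{K}/|K| = |K|^{-1/2}\widetilde{1}_{K}$ against $L(A)_{IJ} = 2^{-(i+j)/2}A^{ij}$; and, on the functional-analytic side, checking that for $f,g$ in the chosen dense class every series in sight is a finite sum, so that the bilinear form, the operator identity, and all the manipulations above are legitimate before the final density argument.
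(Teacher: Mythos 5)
Your proposal is correct. The deduction of the corollary itself is the same one\--line restriction argument as in the paper: any characterisation of $\|P_{\lambda}\|_{2\to2}<\infty$ in terms of $(\lambda_{IJ})$, applied to $\lambda=L(A)$, characterises $\|A\|_{2\to2}<\infty$ in terms of the entries of $A$. Where you differ is that you re\--derive the underlying norm identity $\|A\|_{2\to2}=\|P_{L(A)}\|_{2\to2}$ from scratch instead of quoting Theorem \ref{thm:mainNew}, and your derivation repackages the paper's argument at the operator level: your factorisation $P_{L(A)}=\sum_{i,j}A^{ij}S_{j}R_{i}$, with Bessel for the $R_{i}$, contractivity of the $S_{j}$, and orthogonality of the ranges $L^{2}_{x}\otimes W_{j}$, is precisely the content of the paper's chain $\|P_{\lambda}\|_{2\to2}=\|\lambda\|_{X}\leq\|M_{\lambda}\|$ together with the observation that $M_{L(A)}(x,y)=A$ on $[0,1)^{2}$ --- the sequence $(R_{i}f)_{i}$ plays the role of the paper's $a_{i}(x,y)=2^{i/2}u_{I_{i}(x)}(y)$, and your ``middle inequality'' (the vector\--valued extension of $A$ has the same norm) is exactly the bound $\|M_{L(A)}\|\leq\|A\|$. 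The lower\--bound test functions are also the same: your $f=\phi\otimes 1_{[0,1)}$ corresponds under $u_{I}=\langle f,h_{I}\rangle_{1}$ to the paper's constant sequences $u_{I}=2^{-i/2}a_{i}1_{[0,1)}$. What your packaging buys is self\--containedness and a transparent view of why the equality of norms is exact; what the paper's packaging buys is the isolation of the $M_{\lambda}$ condition (Proposition \ref{Mlambda}) as a general sufficient condition valid beyond the space\--independent case, which it needs for the later examples. All the individual steps you state --- the normalisation cancelling $2^{-(i+j)/2}$ against $1_{K}/|K|$, the three orthogonality facts, the count of $2^{i+j}$ rectangles per scale, and the membership of your test functions in the dense classes $A$ and $B$ (finitely many $I$, respectively $J$) --- check out.
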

Indeed, the general problem is at least as hard as the space-independent special case. A similar statement holds about characterising the \textbf{unconditional} boudedness of $P_{\lambda}$ in terms of \textbf{non-negative matrices}, simply by virtue of the fact that $L$ takes non-negative matrices to non-negative sequences. Combining Theorem \ref{thm:mainNew} with simple constructions, we can obtain the main examples:
\begin{cor}\label{cor:uc}
There exists a sequence $(\lambda_{IJ})_{IJ}$, such that $\|P_{\lambda}\|_{2 \to 2} < \infty$, but $\|P_{|\lambda|}\|_{2 \to 2} = \infty$. In other words, $P_{\lambda}$ is bounded, but not unconditionally bounded.
\end{cor}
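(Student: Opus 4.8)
The plan is to derive Corollary~\ref{cor:uc} directly from Theorem~\ref{thm:mainNew}, reducing it to a classical fact about infinite matrices. First I would record the trivial compatibility of $L$ with entrywise moduli: writing $|A| := (|A^{ij}|)$ for the entrywise modulus of a matrix $A \in \mathcal{M}$, one has $|L(A)| = L(|A|)$ as sequences, since for $I \times J \subset [0,1)^{2}$ the map $L$ only multiplies the relevant entry by the positive scalar $2^{-(i+j)/2}$, and it sends the remaining rectangles to $0$ in either case. Consequently $P_{|L(A)|} = P_{L(|A|)}$ as operators, and applying Theorem~\ref{thm:mainNew} to both $A$ and $|A| \in \mathcal{M}$ gives
\[
\|P_{L(A)}\|_{2 \to 2} = \|A\|_{2 \to 2}, \qquad \|P_{|L(A)|}\|_{2 \to 2} = \|P_{L(|A|)}\|_{2 \to 2} = \||A|\|_{2 \to 2}.
\]
Hence it suffices to produce a single matrix $A \in \mathcal{M}$ that is bounded on $\ell^{2}_{\N}$ but whose entrywise modulus $|A|$ is unbounded on $\ell^{2}_{\N}$; then $\lambda := L(A)$ witnesses the corollary.

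Second, I would construct such an $A$ by a block-diagonal arrangement of normalised Hadamard matrices. For $k \ge 1$ let $H_{2^{k}}$ be the $2^{k} \times 2^{k}$ Sylvester--Hadamard matrix, built by the recursion $H_{2^{k+1}} = \left(\begin{smallmatrix} H_{2^{k}} & H_{2^{k}} \\ H_{2^{k}} & -H_{2^{k}}\end{smallmatrix}\right)$ from $H_{1} = (1)$, so that $H_{2^{k}}$ has entries in $\{-1,1\}$ and $H_{2^{k}}H_{2^{k}}^{\mathsf{T}} = 2^{k}I$; thus $2^{-k/2}H_{2^{k}}$ is orthogonal. Let $A$ be the block-diagonal matrix on $\ell^{2}_{\N}$ whose $k$-th diagonal block is $2^{-k/2}H_{2^{k}}$. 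Each block has operator norm $1$, so $\|A\|_{2 \to 2} = 1 < \infty$. On the other hand, the $k$-th diagonal block of $|A|$ equals $2^{-k/2}$ times the $2^{k} \times 2^{k}$ all-ones matrix, a rank-one operator of norm $2^{-k/2}\cdot 2^{k} = 2^{k/2}$; since compressing to a coordinate block cannot increase the operator norm, $\||A|\|_{2 \to 2} \ge 2^{k/2}$ for every $k$, whence $\||A|\|_{2 \to 2} = \infty$.

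There is essentially no obstacle beyond Theorem~\ref{thm:mainNew} itself, which I take as given: the corollary is a soft consequence of the isometric identity $\|A\|_{2\to 2} = \|P_{L(A)}\|_{2\to 2}$, the compatibility $|L(A)| = L(|A|)$, and the textbook fact that $\ell^{2}$-boundedness of a matrix is not preserved under taking absolute values of the entries. The only routine checks are the two used above: the Sylvester recursion (or, alternatively, the $2^{k}$-point discrete Fourier matrices, at the cost of complex entries) supplies orthogonal $\{-1,1\}$-matrices in all sizes $2^{k}$, and compression to a block of coordinates only decreases operator norm. I would close by restating the conclusion in the paper's language: with $\lambda = L(A)$, the mixed paraproduct $P_{\lambda}$ is bounded on $L^{2}(\R^{2})$ while $P_{|\lambda|}$ is not, i.e.\ $P_{\lambda}$ is conditionally bounded but not unconditionally bounded, in sharp contrast with the one-parameter model operators.
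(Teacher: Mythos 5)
Your proof is correct, and it reaches the conclusion by a somewhat different route than the paper. Both arguments rest on Theorem \ref{thm:mainNew} and on essentially the same sign matrices: the paper's ``Fourier basis on the discrete unit cube'' of size $2^{m}\times 2^{m}$ is exactly your Sylvester--Hadamard matrix $H_{2^{m}}$, normalised to be orthogonal, with entrywise modulus a multiple of the all-ones matrix of norm $2^{m/2}$. The difference is in the packaging. The paper keeps each Hadamard block as a separate finite example $\lambda^{N}$, showing $\|\lambda^{N}\|_{X}\le 1$ while $\||\lambda^{N}|\|_{X}\ge (N+1)^{1/2}$, and then invokes Lemma \ref{lem:enough} --- completeness of $X$ and $X'$ plus the open mapping theorem --- to conclude non-constructively that some $\lambda\in X\setminus X'$ exists. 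You instead assemble the blocks into a single block-diagonal infinite matrix $A$ with $\|A\|_{2\to 2}=1$ and $\||A|\|_{2\to 2}=\infty$, use the (correct and easily checked) compatibility $|L(A)|=L(|A|)$, and apply Theorem \ref{thm:mainNew} twice. This buys you an explicit witness $\lambda=L(A)$ with the quantitative statement $\|P_{\lambda}\|_{2\to 2}=1$ and $\|P_{|\lambda|}\|_{2\to 2}=\infty$, and it avoids the soft functional-analytic step entirely; the paper's version is marginally shorter because it only ever manipulates finite matrices and delegates the passage to a single example to Baire-category machinery. All the auxiliary facts you use (orthogonality of $2^{-k/2}H_{2^{k}}$, the norm $n$ of the $n\times n$ all-ones matrix, and that compression to a coordinate block does not increase operator norm) are standard and correctly applied, so there is no gap.
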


\begin{cor}\label{cor:bmo}
There exists a non-negative sequence $(\lambda_{IJ})_{IJ}$, which is not in product $\BMO$ (in fact, not even rectangular $\BMO$), but nevertheless $\|P_{\lambda}\|_{2 \to 2} < \infty$. So, product BMO is not necessary for the unconditional boundedness of $P_{\lambda}$.
\end{cor}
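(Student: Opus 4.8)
The plan is to read off the corollary from Theorem~\ref{thm:mainNew} together with a short computation of the rectangular $\BMO$ norm of the sequences in the range of $L$. Recall that a sequence $\mu=(\mu_{IJ})$ lies in \emph{rectangular} $\BMO$ if
\[
\|\mu\|_{\BMO_{\textup{rec}}}^{2}:=\sup_{R}\,\frac{1}{|R|}\sum_{I\times J\subset R}\mu_{IJ}^{2}<\infty,
\]
where $R$ ranges over dyadic rectangles, and in (dyadic) \emph{product} $\BMO$ if the same holds with $R$ replaced by an arbitrary open set $\Omega\subset\R^{2}$ of finite measure; in particular $\|\mu\|_{\BMO_{\textup{rec}}}\le\|\mu\|_{\BMO_{\textup{prod}}}$, so it suffices to produce a non-negative $\lambda$ with $\|P_{\lambda}\|_{2\to2}<\infty$ but $\|\lambda\|_{\BMO_{\textup{rec}}}=\infty$. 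Since $L$ sends non-negative matrices to non-negative sequences and, by Theorem~\ref{thm:mainNew}, preserves the operator norm, the task reduces to exhibiting a non-negative matrix $A\in\mathcal{M}$ that is bounded on $\ell^{2}_{\N}$ but with $\|L(A)\|_{\BMO_{\textup{rec}}}=\infty$.

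First I would compute $\|L(A)\|_{\BMO_{\textup{rec}}}$ in terms of $A$. Testing on the square $R=[0,1)^{2}$ and using that $[0,1)$ contains exactly $2^{i}$ dyadic subintervals of length $2^{-i}$, the substitution $L(A)_{IJ}=2^{-(i+j)/2}A^{ij}$ makes the normalising powers of two cancel and one gets
\[
\frac{1}{|R|}\sum_{I\times J\subset R}L(A)_{IJ}^{2}=\sum_{i,j}(A^{ij})^{2}=\|A\|_{\textup{HS}}^{2},
\]
the squared Hilbert--Schmidt norm of $A$. (The analogous computation on a dyadic sub-rectangle of $[0,1)^{2}$ produces only a tail of this sum, and rectangles not contained in $[0,1)^{2}$ give even smaller averages because $L(A)$ vanishes there, so in fact $\|L(A)\|_{\BMO_{\textup{rec}}}=\|A\|_{\textup{HS}}$; but for the corollary only the lower bound $\|L(A)\|_{\BMO_{\textup{rec}}}\ge\|A\|_{\textup{HS}}$ is needed.) In particular $L(A)$ fails to lie in rectangular $\BMO$ whenever $A$ is not a Hilbert--Schmidt operator.

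It then remains to choose a non-negative, $\ell^{2}$-bounded matrix that is not Hilbert--Schmidt: the identity $A^{ij}=\delta_{ij}$ works, with $\|A\|_{2\to2}=1$ and $\|A\|_{\textup{HS}}^{2}=\sum_{i}1=\infty$. Setting $\lambda:=L(A)$, the sequence $\lambda$ is non-negative; $\|P_{\lambda}\|_{2\to2}=\|A\|_{2\to2}=1<\infty$ by Theorem~\ref{thm:mainNew}; and $\lambda\notin\BMO_{\textup{rec}}$, hence a fortiori $\lambda\notin\BMO_{\textup{prod}}$, by the previous step. Finally, since $\lambda\ge0$ we have $P_{|\lambda|}=P_{\lambda}$, so $P_{\lambda}$ is even unconditionally bounded, yet its defining sequence misses product $\BMO$. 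I do not anticipate a genuine obstacle: Theorem~\ref{thm:mainNew} supplies all the analytic content, and the only thing to get right is the elementary bookkeeping above — counting dyadic subintervals and noticing that the exponents $2^{-(i+j)/2}$ in the definition of $L$ are calibrated precisely so that this count cancels, turning the rectangular $\BMO$ condition into the Hilbert--Schmidt condition.
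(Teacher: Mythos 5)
Your proposal is correct and follows essentially the same route as the paper: both take $\lambda = L(\operatorname{Id})$, observe via Theorem \ref{thm:mainNew} and non-negativity that $\|P_{\lambda}\|_{2\to2}=\|P_{|\lambda|}\|_{2\to 2}=\|\operatorname{Id}\|=1$, and check by the same counting of dyadic subrectangles of $[0,1)^{2}$ that the rectangular BMO norm diverges. Your general observation that testing on $[0,1)^{2}$ turns the rectangular BMO condition for $L(A)$ into the Hilbert--Schmidt condition for $A$ is a clean way to package the paper's direct computation, but it is the same calculation.
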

\begin{exmp}
Let $\D$ be the standard dyadic grid in $\R$. Our construction gives us a sequence $\lambda = (\lambda_{IJ})_{I, J \in \D}$ so that $T = P_{\lambda}\colon L^2(\R^2) \to L^2(\R^2)$ boundedly
but $\tilde T = P_{|\lambda|}$ is not bounded on $L^2(\R^2)$. Now $T_1 = \Pi_{\lambda}$. This is not a bounded operator, since
$\|T_1\|_2 = \| \Pi_{\lambda}\|_2 \approx \|\lambda\|_{\textup{BMO}_{\textup{prod}}} = \infty$. This showcases the fact that the boundedness of a bi-parameter singular integral operator
does not, in general, imply the boundedness of its partial adjoint. But the implications for some possible generalisations of the current product $T1$ theorems are far more depressing than this.

Indeed, if there would be a general product $T1$ theorem, it would have to be such that
the assumptions one has to make about the numbers $\langle T_1(1), h_I \otimes h_J\rangle$ are so delicate that they do not imply the same condition for
$|\langle T_1(1), h_I \otimes h_J\rangle|$.
Notice that we have that $T1 = T^*1 = T_1^*(1) = 0 = \tilde T1 = \tilde T^*1 = \tilde T_1^*(1)$. We also have that $\langle T_1(1), h_I \otimes h_J\rangle = \lambda_{IJ}$
and that $T$ is bounded. But $\langle \tilde T_1(1), h_I \otimes h_J\rangle = |\lambda_{IJ}| = |\langle T_1(1), h_I \otimes h_J\rangle|$ and $\tilde T$ is unbounded.

A possible middle ground would be to impose a condition which characterises the unconditional boundedness of the mixed paraproduct. This condition certainly is not the product BMO.
Moreover, we have shown that a characterisation in terms of the sequence $(\lambda_{IJ})_{IJ}$ alone should not be expected. However, unconditionality allows for a reduction to positive bi-parameter
dyadic operators. Therefore, one can try to use (Sawyer type) testing conditions.
\end{exmp}

We record a few other results too. For example, in the course of proving Theorem \ref{thm:mainNew}, we obtain a natural "matrix-based" sufficient condition for the $L^{2}$-boundedness of $P_{\lambda}$, which is potentially useful even outside space-independent setting. But this only gives a general upper bound for $\|P_{\lambda}\|_{2 \to 2}$, and we will show that in general it is neither
weaker nor stronger than the product BMO. Finally, we demonstrate
that $\|P_{\lambda}\|_{2 \to 2} < \infty$ still implies some fairly strong conditions on the sequence $\lambda_{IJ}$. Indeed, some BMO conditions are allowed because of the fact that while it can be that
$\|P_{\lambda}\|_{2 \to 2} < \infty$ and $\|P_{|\lambda|}\|_{2 \to 2} = \infty$, the condition $\|P_{\lambda}\|_{2 \to 2} < \infty$ does imply that
$\|P_{\epsilon_p \lambda}\|_{2 \to 2} < \infty$ for \textbf{product signs} $\epsilon_p = (\epsilon_I \epsilon_J)$.

\section{Preliminaries}\label{preliminaries} 
\subsection{The spaces $X$ and $X'$}
Let $\D$ be the standard dyadic grid in the real line $\R$. We denote a general real sequence indexed by two dyadic cubes $I, J \in \D$ by
$\lambda = (\lambda_{IJ})$. In what follows $I$ and $J$ always stand for cubes from the dyadic grid $\D$. Sometimes we write $R = I \times J$ for a general dyadic rectangle. 

Let us define
\begin{displaymath}
\|\lambda\|_X := \sup \Big| \sum_{I, J} \lambda_{IJ} \ave{g_J}_I \ave{u_I}_J\Big|,
\end{displaymath}
where the supremum is taken over those function sequences $g_J, u_I\colon \R \to \R$ for which
$\#\{J\colon\, g_J \ne 0\} < \infty$, $\#\{I\colon \, u_I \ne 0\} < \infty$ and
\begin{displaymath}
\sum_{J} \|g_J\|_2^2 = \sum_{I} \|u_I\|_2^2 = 1.
\end{displaymath}
Let us define $X := \{\lambda\colon\, \|\lambda\|_X < \infty\}$. This is our space of conditional boundedness.
Similarly, let
\begin{displaymath}
\|\lambda\|_{X'} = \sup  \sum_{I, J} |\lambda_{IJ} \ave{g_J}_I \ave{u_I}_J|,
\end{displaymath}
and $X' := \{\lambda\colon\, \|\lambda\|_{X'} < \infty\}$. This is our space of unconditional boundedness.
One of our aims is to prove that $X \ne X'$. The open mapping theorem is useful in this task, so we first note that $X$ and $X'$ are complete:

\begin{lem}
X and X' are Banach spaces.
\end{lem}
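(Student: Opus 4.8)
The plan is to check first that $\|\cdot\|_{X}$ and $\|\cdot\|_{X'}$ are genuine norms on vector spaces, and then to deduce completeness from the standard argument; the only input that needs a moment's thought is that each coordinate evaluation $\lambda \mapsto \lambda_{IJ}$ is a bounded functional on $X$ (and on $X'$).

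Subadditivity and positive homogeneity of $\|\cdot\|_{X}$ and of $\|\cdot\|_{X'}$ are immediate from the defining suprema; in particular each of $X$ and $X'$ is a linear subspace of the space of all real sequences indexed by dyadic rectangles. For definiteness, fix a rectangle $I_{0} \times J_{0}$ and test against $g_{J_{0}} = |I_{0}|^{-1/2} 1_{I_{0}}$, $u_{I_{0}} = |J_{0}|^{-1/2} 1_{J_{0}}$, with all other $g_{J}$ and $u_{I}$ equal to $0$. These sequences are admissible, since $\sum_{J} \|g_{J}\|_{2}^{2} = \sum_{I} \|u_{I}\|_{2}^{2} = 1$, and they isolate a single term of the sum, giving
\[ \|\lambda\|_{X} \ge \frac{|\lambda_{I_{0}J_{0}}|}{|I_{0}|^{1/2}\,|J_{0}|^{1/2}} \qquad \text{and} \qquad \|\lambda\|_{X'} \ge \frac{|\lambda_{I_{0}J_{0}}|}{|I_{0}|^{1/2}\,|J_{0}|^{1/2}}. \]
Thus $\|\lambda\|_{X} = 0$ (resp. $\|\lambda\|_{X'} = 0$) forces $\lambda = 0$, and, what we really use below, convergence in $X$ or in $X'$ implies entrywise convergence of the sequences.

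For completeness, let $(\lambda^{(n)})_{n}$ be a Cauchy sequence in $X$. By the estimate above, each scalar sequence $(\lambda^{(n)}_{IJ})_{n}$ is Cauchy in $\R$ and hence converges to some number $\lambda_{IJ}$; put $\lambda := (\lambda_{IJ})$. Let $\eps > 0$ and pick $N$ with $\|\lambda^{(n)} - \lambda^{(m)}\|_{X} < \eps$ for all $n,m \ge N$. Fix any admissible test sequences $(g_{J})$, $(u_{I})$. Since only finitely many of them are nonzero, the sum $\sum_{I,J} (\lambda^{(n)}_{IJ} - \lambda^{(m)}_{IJ}) \ave{g_{J}}_{I} \ave{u_{I}}_{J}$ has finitely many terms, so we may let $m \to \infty$ to obtain
\[ \Big| \sum_{I,J} (\lambda^{(n)}_{IJ} - \lambda_{IJ}) \ave{g_{J}}_{I} \ave{u_{I}}_{J} \Big| \le \eps, \qquad n \ge N. \]
Taking the supremum over admissible test sequences yields $\|\lambda^{(n)} - \lambda\|_{X} \le \eps$ for $n \ge N$. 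In particular $\lambda^{(N)} - \lambda \in X$, hence $\lambda = \lambda^{(N)} - (\lambda^{(N)} - \lambda) \in X$, and $\lambda^{(n)} \to \lambda$ in $X$. The argument for $X'$ is verbatim the same: for admissible test sequences the sum $\sum_{I,J} |(\lambda^{(n)}_{IJ} - \lambda^{(m)}_{IJ}) \ave{g_{J}}_{I} \ave{u_{I}}_{J}|$ still has finitely many terms, so the passage $m \to \infty$ and the subsequent supremum go through unchanged.

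The only genuinely load-bearing point is the one isolated above: boundedness of the coordinate functionals. This is what guarantees both that the candidate limit $\lambda$ is a well-defined sequence and that no mass escapes to infinity in the index set. The rest is the textbook completeness proof, and it works precisely because of the finite-support restriction built into the definitions of $\|\cdot\|_{X}$ and $\|\cdot\|_{X'}$.
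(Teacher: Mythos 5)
Your proof is correct and follows essentially the same route as the paper: the same test sequences $g_{J} = |I_{0}|^{-1/2}1_{I_{0}}1_{J=J_{0}}$, $u_{I} = |J_{0}|^{-1/2}1_{J_{0}}1_{I=I_{0}}$ give the coordinate bound $|\lambda_{I_{0}J_{0}}| \le |I_{0}|^{1/2}|J_{0}|^{1/2}\|\lambda\|_{X}$, and the completeness argument is the standard one the paper also uses (the paper writes it as $\|\lambda-\lambda^{N}\|_{X}\le\limsup_{M}\|\lambda^{M}-\lambda^{N}\|_{X}$, which is justified by exactly the finite-sum limiting step you spell out). You additionally write out the $X'$ case, which the paper omits as analogous.
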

\begin{proof}
Let us deal just with the space $X$. It is obvious that $X$ is a normed space. Now, let $(\lambda^N)_N$, where $\lambda^N = (\lambda^N_{IJ})$, be a Cauchy-sequence in $X$.
For fixed $I_0, J_0 \in \D$ the estimate $|\lambda_{I_0J_0}^N - \lambda_{I_0J_0}^M| \le |I_0|^{1/2} |J_0|^{1/2} \|\lambda^N - \lambda^M\|_X$ follows by letting
$g_J(x) = |I_0|^{-1/2}1_{I_0}(x)1_{J = J_0}(J)$ and $u_I(x) = |J_0|^{-1/2}1_{J_0}(x)1_{I = I_0}(I)$. Hence there exists $\lambda_{I_0J_0} = \lim_{N \to \infty} \lambda_{I_0J_0}^N$.
Let $\lambda = (\lambda_{IJ})$. Next, notice that
\begin{displaymath}
\| \lambda - \lambda^N\| \le \limsup_{M \to \infty} \|\lambda^M - \lambda^N\|_X \to 0,
\end{displaymath}
when $N \to \infty$. We conclude $\lambda \in X$ and $\lambda^N \to \lambda$ in $X$.
\end{proof}
\begin{lem}\label{lem:enough}
To prove that $X' \ne X$ it is enough to construct a sequence $(\lambda^N)_N$ so that $\|\lambda^N\|_{X'} \to \infty$, when $N \to \infty$, but
$\|\lambda^N\|_X \le 1$ for all $N$. 
\end{lem}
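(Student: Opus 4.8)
The plan is to argue by contradiction using the open mapping theorem, exactly as hinted at by the remark preceding the statement. First I would record the one trivial relation between the two norms: for every finitely supported admissible pair of sequences $(g_J)$, $(u_I)$ (those with $\sum_J \|g_J\|_2^2 = \sum_I \|u_I\|_2^2 = 1$) we have $\big|\sum_{I,J} \lambda_{IJ} \ave{g_J}_I \ave{u_I}_J\big| \le \sum_{I,J} |\lambda_{IJ} \ave{g_J}_I \ave{u_I}_J|$, and taking the supremum over all such pairs gives $\|\lambda\|_X \le \|\lambda\|_{X'}$ for every $\lambda$. In particular $X' \subseteq X$ as sets, and the inclusion map $\iota \colon X' \to X$ is linear and bounded with $\|\iota\| \le 1$.

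Next I would assume, towards a contradiction, that $X' \ne X$ fails, i.e.\ that $X = X'$ as sets. Then $\iota$ is a bijective bounded linear map between two Banach spaces — completeness of both $X$ and $X'$ having been established in the previous lemma — so the open mapping theorem (equivalently, the bounded inverse theorem) applies and yields a constant $C < \infty$ with $\|\lambda\|_{X'} \le C \|\lambda\|_X$ for every $\lambda$.

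Finally I would feed the hypothesised sequence $(\lambda^N)_N$ into this inequality: since $\|\lambda^N\|_X \le 1$ for all $N$, we would get $\|\lambda^N\|_{X'} \le C$ for all $N$, contradicting $\|\lambda^N\|_{X'} \to \infty$. Hence $X = X'$ is impossible, that is, $X' \ne X$, which is what we wanted.

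As for the main obstacle: there really is none at the level of this lemma — the whole argument is just the open mapping theorem applied to the obvious norm inequality, and the only genuinely nontrivial input, completeness of $X$ and $X'$, is already in hand. The actual difficulty is deferred to the rest of the paper, where one must exhibit such a sequence $(\lambda^N)_N$; this is where Theorem~\ref{thm:mainNew} and the matrix constructions come in, producing a family of matrices that is uniformly bounded on $\ell^2$ while the corresponding absolute-value (unconditional) norms blow up.
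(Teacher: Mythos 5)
Your proof is correct and follows exactly the same route as the paper: the trivial inequality $\|\lambda\|_X \le \|\lambda\|_{X'}$, completeness of both spaces from the preceding lemma, and the open mapping (bounded inverse) theorem to produce a constant $C$ with $\|\lambda\|_{X'} \le C\|\lambda\|_X$, which the hypothesised sequence contradicts. No issues.
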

\begin{proof}
Notice that $\|\lambda\|_{X} \le \|\lambda\|_{X'}$.
If $X = X'$ then the open mapping theorem
implies that there is a $C < \infty$ so that $\|\lambda\|_{X'} \le C\|\lambda\|_X$ for all $\lambda$.
But this is impossible if a sequence $(\lambda^N)_N$ like in the statement of the lemma exists.
\end{proof}

\subsection{Connection to mixed paraproducts}
In this section, we establish the relevance of the spaces $X$ and $X'$ to the boundedness properties of mixed paraproducts. 
Let $h_I := |I|^{-1/2}(1_{I_l} - 1_{I_r})$, where $I_l, I_r \in \D$ are the left and right halves of $I$. That is, $h_I$ is an $L^2(\R)$ normalised Haar function with zero mean.
Let us define the dense collections $A, B \subset L^2(\R^2)$ by
\begin{displaymath}
A := \Big\{f\colon\, \|f\|_2= 1 \textup{ and } f = \sum_{I, J} f_{IJ}h_I \otimes h_J \textup{ with } f_{IJ} \ne 0 \textup{ for only finitely many } I\Big\}
\end{displaymath}
and
\begin{displaymath}
B := \Big\{g\colon\, \|g\|_2 = 1 \textup{ and } g = \sum_{I, J} g_{IJ}h_I \otimes h_J \textup{ with } g_{IJ} \ne 0 \textup{ for only finitely many } J\Big\}.
\end{displaymath}
We consider bilinear forms $P_{\lambda}$ initially defined for $f \in A$ and $g \in B$ by the formula
\begin{displaymath}
P_{\lambda}(f, g) := \sum_{I,J} \lambda_{IJ} \Big\langle f, h_I \otimes \frac{1_J}{|J|} \Big\rangle \Big\langle g, \frac{1_I}{|I|} \otimes h_J\Big\rangle.
\end{displaymath}
The summation in the definition of $P_{\lambda}(f,g)$ is finite, since $\Big\langle f, h_I \otimes \frac{1_J}{|J|} \Big\rangle \ne 0$ for only finitely many $I$ and
$\Big\langle g, \frac{1_I}{|I|} \otimes h_J\Big\rangle \ne 0$ for only finitely many $J$.
\begin{lem}
A sequence $(g_J) \in \ell^2(L^2(\R))$ is such that $\#\{J\colon\, g_J \ne 0\} < \infty$ and $\sum_{J} \|g_J\|_2^2 = 1$ if and only if
there exists a $g \in B$ such that for each $J$ we have $g_J = \langle g, h_J \rangle_2$, where  $\langle g, h_J\rangle_2(x) = \int_{\R} g(x,y)h_J(y)\,dy$.
\end{lem}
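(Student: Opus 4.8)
The plan is to pass between the ``sequence picture'' $(g_J)_J$ and the ``function picture'' $g \in B$ via the Haar expansion in the second variable. The underlying fact I would use is that the one-parameter Haar system $\{h_J : J \in \D\}$ is an orthonormal basis of $L^2(\R)$ (on the whole line no scaling function is needed, since the dyadic averages of an $L^2$ function tend to $0$ over increasing cubes), and consequently $\{h_I \otimes h_J : I, J \in \D\}$ is an orthonormal basis of $L^2(\R^2)$. I would also record that the partial pairing $u \mapsto \langle u, h_J\rangle_2$ is a bounded operator $L^2(\R^2) \to L^2(\R)$ of norm $\le 1$, by Cauchy--Schwarz in the $y$ variable together with $\|h_J\|_2 = 1$.

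For the ``if'' direction, suppose $g \in B$, say $g = \sum_{I,J} g_{IJ} h_I \otimes h_J$ with $\|g\|_2 = 1$ and $g_{IJ} \neq 0$ for only finitely many $J$. Fixing $J$, I would compute $\langle g, h_J\rangle_2 = \sum_{I} g_{IJ} h_I =: g_J$, which follows from $\langle h_I \otimes h_{J'}, h_J\rangle_2 = \delta_{J'J}\, h_I$ applied term by term to the $L^2$-convergent Haar series of $g$ (legitimate because the partial pairing is $L^2$-continuous). Parseval in one variable gives $\|g_J\|_2^2 = \sum_I g_{IJ}^2$, and summing over $J$ and using Parseval in two variables yields $\sum_J \|g_J\|_2^2 = \sum_{I,J} g_{IJ}^2 = \|g\|_2^2 = 1$. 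Since $g_J = 0$ whenever $g_{IJ} = 0$ for all $I$, only finitely many $g_J$ are non-zero.

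For the converse, given $(g_J) \in \ell^2(L^2(\R))$ with $\#\{J : g_J \ne 0\} < \infty$ and $\sum_J \|g_J\|_2^2 = 1$, I would expand each $g_J$ in the Haar basis of $L^2(\R)$, writing $g_J = \sum_I g_{IJ} h_I$ with $g_{IJ} = \langle g_J, h_I\rangle$, and set $g := \sum_{I,J} g_{IJ}\, h_I \otimes h_J$. As $\sum_{I,J} g_{IJ}^2 = \sum_J \|g_J\|_2^2 = 1 < \infty$, this series converges in $L^2(\R^2)$ to a function of norm $1$, and only the finitely many $J$ with $g_J \ne 0$ contribute, so $g \in B$. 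By the computation of the previous paragraph, $\langle g, h_J\rangle_2 = \sum_I g_{IJ} h_I = g_J$ for every $J$, as desired.

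The only points requiring any care are the completeness of the Haar system on $\R$ and the termwise application of the partial pairing to the a priori merely $L^2$-convergent Haar series of $g$; both are dispatched by the $L^2$-boundedness of $u \mapsto \langle u, h_J\rangle_2$, so I do not expect a genuine obstacle here. The statement is essentially a bookkeeping identity recording that a function in $B$ and a finitely-supported norm-one element of $\ell^2(L^2(\R))$ are two encodings of the same datum.
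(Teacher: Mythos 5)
Your proof is correct and follows essentially the same route as the paper's: the paper defines $g = \sum_J g_J \otimes h_J$ and invokes the orthonormality of the Haar system, while you merely spell out the intermediate step of expanding each $g_J = \sum_I g_{IJ} h_I$ so that $g$ visibly matches the double-Haar-series form in the definition of $B$. The extra detail (Parseval in each variable, $L^2$-continuity of the partial pairing) is all standard and the paper leaves it implicit.
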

\begin{proof}
Given a sequence $(g_J)$ one defines $g$ by setting $g = \sum_J g_J \otimes h_J$. Then $g \in B$ and $g_J = \langle g, h_J \rangle_2$ for every $J$. The converse
direction is clear.
\end{proof}
Now notice that
\begin{displaymath}
\|P_{\lambda}\|_{2\to2} := \mathop{\sup_{f \in A}}_{g \in B} |P_{\lambda}(f,g)| = \mathop{\sup_{f \in A}}_{g \in B} \Big| \sum_{I,J} \lambda_{IJ} \ave{ \langle g, h_J\rangle_2}_I  \ave{\langle f, h_I\rangle_1}_J \Big| = \|\lambda\|_X.
\end{displaymath}
Let then $\epsilon = (\epsilon_{IJ})$, where $\epsilon_{IJ} = \pm 1$, and define $\epsilon \lambda = (\epsilon_{IJ} \lambda_{IJ})$. We have that
\begin{displaymath}
\sup_{\epsilon} \|P_{\epsilon \lambda}\|_{2\to2} = \sup_{\epsilon} \|\epsilon \lambda\|_X = \| \lambda \|_{X'}.
\end{displaymath}

\section{Main estimates and examples}\label{mainProofs}
As we have seen, for a given sequence $\lambda$, the $L^2(\R^2) \times  L^2(\R^2) \to \R$ boundedness of the bilinear form $P_{\lambda}$ is characterised by the $X$-norm of $\lambda$ i.e. the
$\ell^{2}_{\mathcal{D}}(L^{2}(\R)) \times \ell^{2}_{\mathcal{D}}(L^{2}(\R)) \to \R$ boundedness of the bilinear form
\begin{displaymath}
\Gamma_{\lambda}((u_{I}),(g_{J})) := \sum_{I,J} \lambda_{IJ} \langle g_{J} \rangle_{I} \langle u_{I} \rangle_{J}.
\end{displaymath}
We now introduce an operator $M_{\lambda}$, associated to the sequence $\lambda$ as follows. For $x,y \in \R$, consider the infinite matrix $M_{\lambda}(x,y)$ with the entries
\begin{displaymath}
M_{\lambda}^{ij}(x,y) := 2^{(i + j)/2} \lambda_{I_{i}(x)J_{j}(y)}, \qquad i,j \in \Z,
\end{displaymath}
where $I_{i}(x)$ and $J_{j}(y)$ are the unique dyadic intervals of lengths $2^{-i}$ and $2^{-j}$ containing $x$ and $y$, respectively. If $a = (a_{i})_{i \in \Z}$ is a sequence of functions
$\R^2 \to \R$, one may (formally) obtain another such sequence of functions $((M_{\lambda}a)_{j})_{j \in \Z}$ by considering the pointwise matrix product 
\begin{displaymath} (M_{\lambda}a)_{j}(x,y) := \sum_{i \in \Z} M_{\lambda}^{ij}(x,y)a_{i}(x,y). \end{displaymath} 

We now demonstrate a connection between the $\ell^{2}_{\mathcal{D}}(L^{2}(\R)) \times \ell^{2}_{\mathcal{D}}(L^{2}(\R)) \to \R$ boundedness of $\Gamma_{\lambda}$ and the
$\ell^2_{\Z}(L^2(\R^2)) \to \ell^2_{\Z}(L^2(\R^2))$ boundedness of $M_{\lambda}$. Let $\mathcal{D}_{i}$ denote the dyadic intervals of length $2^{-i}$. Then, we have that
\begin{align*}
|\Gamma_{\lambda}((u_{I}),(g_{J}))| & = \Big|\iint \sum_{I,J} \frac{\lambda_{IJ}}{|I||J|} g_{J}(x)1_{I}(x)u_{I}(y)1_{J}(y) \, dx \, dy\Big|\\
& = \Big|\iint \sum_{i,j \in \Z} 2^{i + j} \sum_{I \times J \in \mathcal{D}_{i} \times \mathcal{D}_{j}} \lambda_{IJ}g_{J}(x)1_{I}(x)u_{I}(y)1_{J}(y) \, dx \, dy\Big|\\
& = \Big| \iint \sum_{i,j \in \Z} M_{\lambda}^{ij}(x,y) [2^{i/2}u_{I_{i}(x)}(y)] [2^{j/2}g_{J_{j}(y)}(x)] \, dx \, dy \Big|\\
& = |\langle M_{\lambda}a, b \rangle_{\ell^2_{\Z}(L^2(\R^2))} |, 
\end{align*}
where $a = (a_i)$, $b=(b_j)$ are given by
\begin{displaymath} a_{i}(x,y) = 2^{i/2}u_{I_{i}(x)}(y) \quad \text{and} \quad b_{j}(x,y) = 2^{j/2}g_{J_{j}(y)}(x). \end{displaymath}
There holds that
\begin{displaymath}
\|a\|_{\ell^2_{\Z}(L^2(\R^2))} = \|(u_{I})\|_{\ell^{2}_{\mathcal{D}}(L^{2}(\R))} \quad \text{and} \quad \|b\|_{\ell^2_{\Z}(L^2(\R^2))} = \|(g_{J})\|_{\ell^{2}_{\mathcal{D}}(L^{2}(\R))}.
\end{displaymath}
This proves the following proposition:
\begin{prop}[The $M_{\lambda}$ condition]\label{Mlambda} For any sequence $\lambda = (\lambda_{IJ})$, we have that
\begin{equation}\label{form1}
\|\lambda\|_{X} \leq \|M_{\lambda}\|_{\ell^2_{\Z}(L^2(\R^2)) \to \ell^2_{\Z}(L^2(\R^2))} =: \|M_{\lambda}\|.
\end{equation}
\end{prop}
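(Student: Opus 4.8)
The plan is to retrace, with the bookkeeping made fully explicit, the computation displayed just before the statement. Recall from Section~\ref{preliminaries} that $\|\lambda\|_X$ coincides with the $\ell^{2}_{\D}(L^{2}(\R)) \times \ell^{2}_{\D}(L^{2}(\R)) \to \R$ operator norm of the bilinear form $\Gamma_{\lambda}$. Hence it suffices to show $|\Gamma_{\lambda}((u_I),(g_J))| \le \|M_{\lambda}\|$ for every pair of finitely supported sequences with $\sum_I \|u_I\|_2^2 = \sum_J \|g_J\|_2^2 = 1$; if $\|M_{\lambda}\| = \infty$ there is nothing to prove, so we may assume it is finite.

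First I would write each average as an integral, $\ave{g_J}_I = |I|^{-1}\int_I g_J$ and $\ave{u_I}_J = |J|^{-1}\int_J u_I$, so that $\Gamma_{\lambda}((u_I),(g_J)) = \iint \sum_{I,J} \frac{\lambda_{IJ}}{|I||J|}\, g_J(x) 1_I(x) u_I(y) 1_J(y)\,dx\,dy$. Next I would group the $I,J$ sum by side lengths: for fixed $x,y$ and fixed scales $i,j \in \Z$, exactly one pair $I \times J \in \mathcal{D}_i \times \mathcal{D}_j$ satisfies $1_I(x)1_J(y) = 1$, namely $I_i(x) \times J_j(y)$. Absorbing $(|I||J|)^{-1} = 2^{i+j}$ partly into the definition $M_{\lambda}^{ij}(x,y) = 2^{(i+j)/2}\lambda_{I_i(x)J_j(y)}$ and redistributing the remaining $2^{(i+j)/2}$ between the test functions, the integrand becomes $\sum_{i,j \in \Z} M_{\lambda}^{ij}(x,y)\, a_i(x,y)\, b_j(x,y)$ with $a_i(x,y) = 2^{i/2} u_{I_i(x)}(y)$ and $b_j(x,y) = 2^{j/2} g_{J_j(y)}(x)$, so that $\Gamma_{\lambda}((u_I),(g_J)) = \langle M_{\lambda}a, b\rangle_{\ell^2_{\Z}(L^2(\R^2))}$.

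The one computation worth doing carefully is the norm identity $\|a\|_{\ell^2_{\Z}(L^2(\R^2))} = \|(u_I)\|_{\ell^2_{\D}(L^2(\R))}$ and its twin for $b$: since the level-$i$ intervals partition $\R$, for each $i$ one has $\iint |a_i(x,y)|^2\,dx\,dy = \sum_{I \in \mathcal{D}_i} 2^i \int_I \!\int_{\R} |u_I(y)|^2\,dy\,dx = \sum_{I \in \mathcal{D}_i} \|u_I\|_2^2$, and summing over $i$ gives the claim; in particular $a$ and $b$ lie in $\ell^2_{\Z}(L^2(\R^2))$ and are supported in only finitely many scale indices, because only finitely many $u_I$, resp. $g_J$, are nonzero. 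It then remains to invoke the definition of the operator norm, $|\langle M_{\lambda}a,b\rangle| \le \|M_{\lambda}a\|\,\|b\| \le \|M_{\lambda}\|\,\|a\|\,\|b\| = \|M_{\lambda}\|$, and take the supremum over admissible $(u_I),(g_J)$. I do not expect a genuine obstacle: everything above is a finite, absolutely convergent rearrangement, and the only step requiring a line of care is the passage from the double $(I,J)$-sum to the $(i,j)$-sum of pointwise matrix products, i.e. verifying that the "one interval per scale through a given point" bookkeeping is exactly what realises $\Gamma_{\lambda}$ as the action of $M_{\lambda}$.
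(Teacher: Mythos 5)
Your argument is correct and is essentially identical to the paper's own derivation: the same rewriting of $\Gamma_{\lambda}$ as $\langle M_{\lambda}a, b\rangle_{\ell^2_{\Z}(L^2(\R^2))}$ with $a_i(x,y) = 2^{i/2}u_{I_i(x)}(y)$, $b_j(x,y) = 2^{j/2}g_{J_j(y)}(x)$, followed by the norm identities and Cauchy--Schwarz. The only difference is that you spell out the verification of $\|a\|_{\ell^2_{\Z}(L^2(\R^2))} = \|(u_I)\|_{\ell^{2}_{\D}(L^{2}(\R))}$, which the paper states without computation.
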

In general, the condition $\|M_{\lambda}\| < \infty$ is sufficient but not necessary for the boundedness of $P_{\lambda}$ -- we will demonstrate this by an example later.
However, when specialising to space-independent paraproducts we have the equality of Theorem \ref{thm:mainNew}.
\begin{proof}[Proof of Theorem \ref{thm:mainNew}]
Let a matrix $A \in \mathcal{M}$ be given. We have $\|P_{L(A)}\|_{2 \to 2} = \|L(A)\|_X \le \|M_{L(A)}\| \le \|A\|$. The last inequality follows from $M_{L(A)}(x,y) = A$ for $(x,y) \in [0,1)^{2}$.
Conversely, given two sequences of numbers $(a_i)_{i \in \N}$ and $(b_j)_{j \in \N}$ we have that
\begin{align*}
\Big| \sum_{i, j \in \N} A^{ij} a_i b_j\Big| &= \Big| \sum_{i, j \in \N} A^{ij} a_i b_j 2^{-(i+j)} \mathop{\sum_{I \times J \in \mathcal{D}_{i} \times \mathcal{D}_{j}}}_{I \times J \subset [0,1)^2} 1 \Big| \\
&= \Big| \sum_{i, j \in \N} \mathop{\sum_{I \times J \in \mathcal{D}_{i} \times \mathcal{D}_{j}}}_{I \times J \subset [0,1)^2} L(A)_{IJ} 2^{-j/2}b_j 2^{-i/2}a_i\Big|
= \Big| \sum_{I, J} L(A)_{IJ} \langle g_J \rangle_I \langle u_I \rangle_J \Big|
\end{align*}
for $g_{J} = 2^{-j/2}b_{j}1_{[0,1)}$, if $|J| = 2^{-j}$, $j \in \N$, and $u_{I} = 2^{-i/2}a_{i}1_{[0,1)}$, if $|I| = 2^{-i}$, $i \in \N$. 
Noting that e.g. $\|(b_{j})\|_{\ell^{2}_{\N}} = \|(g_{J})\|_{\ell^{2}_{\mathcal{D}}(L^{2}(\R))}$ we see that $\|A\| \le \|L(A)\|_X = \|P_{L(A)}\|_{2 \to 2}$.
So we conclude that $\|A\| = \|P_{L(A)}\|_{2 \to 2}$.
\end{proof}

Next, we record the proofs of our main examples i.e. the proofs of Corollaries \ref{cor:uc} and \ref{cor:bmo}. We start from the latter since it is simpler.
\begin{proof}[Proof of Corollary \ref{cor:bmo}]
We recall that
\begin{displaymath}
\|\lambda\|_{\BMO_{\textup{rec}}} := \sup_{I_0 \times J_0 \in \D \times \D} \Big( \frac{1}{|I_0 \times J_0|} \sum_{I, J:\, I \times J \subset I_0 \times J_0} |\lambda_{IJ}|^2 \Big)^{1/2}.
\end{displaymath}
Define the non-negative sequence $\lambda_{IJ} = |I|^{1/2}|J|^{1/2}$, if $|I| = |J|$ and $I \times J \subset [0,1)^{2}$. Otherwise set $\lambda_{IJ} = 0$. Then, there holds that
\begin{displaymath}
\|\lambda\|_{\BMO_{\textup{rect}}}^2 \ge \sum_{I \times J \subset [0,1)^{2}} \lambda_{IJ}^{2} = \sum_{I \subset [0,1)} \sum_{J : |J| = |I|} |I||J| = \sum_{I \subset [0,1)} |I| = \infty.
\end{displaymath}
However, now $L(\operatorname{Id}) = \lambda$, where $\operatorname{Id}$ is the identity matrix.
Also using the fact that the numbers $\lambda_{IJ}$ are non-negative, we infer that
$\|\lambda\|_{X'} = \|\lambda\|_X = \|\operatorname{Id}\| = 1$. Therefore, $P_{\lambda}$ is unconditionally bounded but $\lambda$ does not belong to the product BMO.
\end{proof}

\begin{proof}[Proof of Corollary \ref{cor:uc}]
Let us fix $N = 2^m - 1$ for some $m \in \N$. We shall consider certain specific signs $\epsilon_{ij} = \pm 1$, $i,j \in \{0, 1, \ldots, N\}$, known as the Fourier basis in the area of Fourier analysis on the discrete unit cube.
Let us explain what these are.
They are signs for which the vectors $v_i = (\epsilon_{ij})_{j=0}^N \in \R^{N+1}$, $i = 0, 1, \ldots, N$, satisfy the orthogonality relation $v_i \cdot v_{i'} = (N+1)\delta_{ii'}$. Given these signs we may define
$\lambda^N = L(A_N)$, where $A_N^{ij} = (N+1)^{-1/2}\epsilon_{ij}$ for $i,j \in \{0, 1, \ldots, N\}$ and $A_N^{ij} = 0$ otherwise. Now $\|\lambda^N\|_{X'} = \||\lambda^N|\|_X \ge (N+1)^{1/2}$
but $\|\lambda^N\|_X \le 1$ by Theorem \ref{thm:mainNew}. The proof is finished by appealing to Lemma \ref{lem:enough}.

We still note that should the reader not be familiar with the Fourier basis, one does not really need to use these specific signs.
Indeed, a basic theorem from random matrix theory
implies that with independent choice of signs $\pm 1$ the norm of the corresponding $(N+1) \times (N+1)$-matrix is approximately $\sqrt{N+1}$ with high probability (for all large $N$).
For example, see Corollary 2.3.5 of Tao's book \cite{Ta}.
\end{proof}
\section{Additional remarks and examples}
\subsection{The $M_{\lambda}$ condition and product BMO}
We study the connection of the $M_{\lambda}$ condition with the classical product BMO condition, which is known to be sufficient even for unconditional boundedness of $P_{\lambda}$.
We start by giving a short proof of this fact. Recall that
\begin{equation*}
\|\lambda\|_{\BMO_{\textup{prod}}} := \sup_{\Omega} \Big( \frac{1}{|\Omega|} \sum_{I, J:\, I \times J \subset \Omega} |\lambda_{IJ}|^2 \Big)^{1/2},
\end{equation*}
where the supremum is taken over those sets $\Omega \subset \R^2$ such that $|\Omega| < \infty$ and such that for every $x \in \Omega$ there exists $I, J$ so that $x \in I \times J \subset \Omega$.
\begin{prop}
There holds that
\begin{displaymath}
\|\lambda\|_{X'} \lesssim \|\lambda\|_{\BMO_{\textup{prod}}}.
\end{displaymath}
\end{prop}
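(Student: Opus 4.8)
The plan is to reduce the (unconditional) $X'$-estimate to a (conditional) $X$-estimate using the sign-invariance of the $\BMO_{\textup{prod}}$ norm, and then prove the conditional estimate by the duality between $H^{1}_{\textup{prod}}$ and $\BMO_{\textup{prod}}$ on the product space, the only genuinely new ingredient being a short square function bound. First I would invoke the identity $\|\lambda\|_{X'} = \sup_{\epsilon} \|\epsilon\lambda\|_X$ from Section~\ref{preliminaries}, where $\epsilon = (\epsilon_{IJ})$ runs over all sign sequences. Since the $\BMO_{\textup{prod}}$ norm depends only on $(|\lambda_{IJ}|)$, we have $\|\epsilon\lambda\|_{\BMO_{\textup{prod}}} = \|\lambda\|_{\BMO_{\textup{prod}}}$, so it suffices to prove the conditional bound $\|\mu\|_X \lesssim \|\mu\|_{\BMO_{\textup{prod}}}$ for an arbitrary real sequence $\mu = (\mu_{IJ})$. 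Fixing finitely supported sequences $(u_I), (g_J)$ of functions $\R \to \R$ with $\sum_I \|u_I\|_2^2 = \sum_J \|g_J\|_2^2 = 1$, I would form the (finite) Haar sum
\begin{displaymath}
\Phi := \sum_{I,J} \langle u_I \rangle_J \langle g_J \rangle_I \, h_I \otimes h_J,
\end{displaymath}
so that $\langle \Phi, h_I \otimes h_J \rangle = \langle u_I \rangle_J \langle g_J \rangle_I$ and hence $\sum_{I,J} \mu_{IJ} \langle u_I \rangle_J \langle g_J \rangle_I = \sum_R \mu_R \langle \Phi, h_R \rangle$, abbreviating $R = I \times J$, $h_R = h_I \otimes h_J$, $\mu_R = \mu_{IJ}$.

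The key step is a pointwise bound $S\Phi \le \mathcal{F} \cdot \mathcal{G}$ for the dyadic bi-parameter square function $S\Phi(x,y) = \big( \sum_{R \ni (x,y)} |\langle \Phi, h_R \rangle|^2 / |R| \big)^{1/2}$, by two non-negative functions $\mathcal{F}, \mathcal{G}$ on $\R^2$ with $\|\mathcal{F}\|_2^2 \lesssim \sum_I \|u_I\|_2^2$ and $\|\mathcal{G}\|_2^2 \lesssim \sum_J \|g_J\|_2^2$. Writing $M$ for the dyadic maximal operator on $\R$ and using $|\langle g_J \rangle_I| \le (Mg_J)(x)$ for $x \in I$, one estimates
\begin{align*}
S\Phi(x,y)^2 &= \sum_{I \ni x} \frac{1}{|I|} \sum_{J \ni y} |\langle u_I \rangle_J|^2 \, \frac{|\langle g_J \rangle_I|^2}{|J|} \le \sum_{I \ni x} \frac{1}{|I|} \Big( \sup_{J \ni y} |\langle u_I \rangle_J|^2 \Big) \sum_{J \ni y} \frac{|\langle g_J \rangle_I|^2}{|J|} \\
&\le \Big( \sum_{J \ni y} \frac{(Mg_J)(x)^2}{|J|} \Big) \sum_{I \ni x} \frac{(Mu_I)(y)^2}{|I|} =: \mathcal{G}(x,y)^2 \, \mathcal{F}(x,y)^2 ,
\end{align*}
and then, integrating $\mathcal{F}^2$ and $\mathcal{G}^2$ over $\R^2$, applying Fubini and the $L^2$-boundedness of $M$, obtains $\|\mathcal{F}\|_2^2 = \sum_I \|Mu_I\|_2^2 \lesssim 1$ and $\|\mathcal{G}\|_2^2 \lesssim 1$. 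By Cauchy--Schwarz this yields $\|S\Phi\|_{L^1(\R^2)} \le \|\mathcal{F}\|_2 \|\mathcal{G}\|_2 \lesssim 1$.

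To finish, I would use the square function characterisation of $H^1_{\textup{prod}}(\R^2)$ to get $\|\Phi\|_{H^1_{\textup{prod}}} \lesssim \|S\Phi\|_{L^1} \lesssim 1$, and then the Chang--Fefferman duality between $H^1_{\textup{prod}}$ and $\BMO_{\textup{prod}}$ to conclude
\begin{displaymath}
\Big| \sum_{I,J} \mu_{IJ} \langle u_I \rangle_J \langle g_J \rangle_I \Big| = \Big| \sum_R \mu_R \langle \Phi, h_R \rangle \Big| \lesssim \|\mu\|_{\BMO_{\textup{prod}}} \, \|\Phi\|_{H^1_{\textup{prod}}} \lesssim \|\mu\|_{\BMO_{\textup{prod}}} .
\end{displaymath}
Taking the supremum over $(u_I), (g_J)$ gives $\|\mu\|_X \lesssim \|\mu\|_{\BMO_{\textup{prod}}}$, and undoing the reduction of the first paragraph yields $\|\lambda\|_{X'} \lesssim \|\lambda\|_{\BMO_{\textup{prod}}}$. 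I expect the main obstacle to be the product $H^1$--$\BMO$ duality, which ultimately rests on Journ\'e's covering lemma and which I would simply cite; the only new content is the displayed square function computation, where the delicate points are to extract the supremum from the correct inner sum and to keep careful track of the variable in which each dyadic maximal function acts.
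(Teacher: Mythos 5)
Your proposal is correct and follows essentially the same route as the paper: the heart of both arguments is the identical pointwise bound of the bi-parameter square function of $\sum_{I,J}\langle u_I\rangle_J\langle g_J\rangle_I\,h_I\otimes h_J$ by a product of two maximal-function square expressions, followed by Cauchy--Schwarz and the $L^2$-boundedness of the dyadic maximal operator. The only difference is that you cite the Chang--Fefferman $H^1$--$\BMO_{\textup{prod}}$ duality as a black box (and then need the extra sign-reduction step, since that pairing inequality in its unconditional form $\sum_R|\mu_R||\langle\Phi,h_R\rangle|\lesssim\|\mu\|_{\BMO_{\textup{prod}}}\|S\Phi\|_1$ would give $X'$ directly), whereas the paper proves exactly this pairing inequality from scratch via the level sets $U_k=\{s_A>2^k\}$, $V_k=\{M1_{U_k}>1/2\}$ and the rectangle classes $\mathcal{R}_k$ -- an argument which, incidentally, needs only the $L^2$-boundedness of the strong maximal function, not Journ\'e's covering lemma.
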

\begin{proof}
Consider another sequence $A = (A_{IJ})$.
We will show that
\begin{equation}\label{eq:H}
\sum_{I, J} |\lambda_{IJ}| |A_{IJ}| \lesssim \|\lambda\|_{\BMO_{\textup{prod}}} \|s_A\|_1, \qquad s_A := \Big( \sum_{I, J} |A_{IJ}|^2 \frac{1_{I \times J}}{|I \times J|} \Big)^{1/2}.
\end{equation}
Indeed, let $U_k := \{s_A > 2^k\}$, $V_k := \{M1_{U_k} > 1/2\}$ and $\mathcal{R}_k := \{R = I \times J\colon \, |R \cap U_k| > |R|/2\}$. 
Here $M$ is the (dyadic) strong maximal function in $\R^2$. We make a sequence of useful observations.
Notice first that $\bigcup_{R \in \mathcal{R}_k} R \subset V_k$. If $R \not \in \bigcup_{k \in \Z} \mathcal{R}_k$, then $|R \cap \{s_A = 0\}| \ge |R|/2$. Therefore, we have that
$$
\sum_{R:\, R \not \in \bigcup_{k \in \Z} \mathcal{R}_k} |A_R|^2 \le 2\int_{\{s_A = 0\}} s_A^2 = 0.
$$
So if $A_R \ne 0$ then $R \in \bigcup_{k \in \Z} \mathcal{R}_k$. If $R \in \bigcap_{k \in \Z} \mathcal{R}_k$, then $0 = |R \cap \{s_A = \infty\}| \ge |R|/2$ (we may assume
$\|s_A\|_1 < \infty$) -- but this is absurd.

We may thus rearrange the summation and then estimate as follows:
\begin{align*}
\sum_{R = I \times J} |\lambda_R| |A_R| &= \sum_{k \in \Z} \sum_{R: \, R \in \mathcal{R}_k \setminus \mathcal{R}_{k+1}} |\lambda_R| |A_R| \\
&\le \sum_{k \in \Z} \Big( \sum_{R: \, R \subset V_k} |\lambda_R|^2\Big)^{1/2} \Big( 2 \int_{V_k \setminus U_{k+1}} s_A^2 \Big)^{1/2} \\
&\lesssim \|\lambda\|_{\BMO_{\textup{prod}}} \sum_{k \in \Z} 2^k|V_k| \lesssim  \|\lambda\|_{\BMO_{\textup{prod}}} \sum_{k \in \Z} 2^k|U_k| \approx \|\lambda\|_{\BMO_{\textup{prod}}} \|s_A\|_1.
\end{align*}
We have proved \eqref{eq:H}. Let us then specify $A_{IJ} :=  \ave{g_J}_I \ave{u_I}_J$. Let $M_{\D}$ denote the dyadic maximal function in $\R$. With this choice we have that
$$
s_A \le  \Big( \sum_J [M_{\D}(g_J)]^2 \otimes \frac{1_J}{|J|}\Big)^{1/2} \Big( \sum_I \frac{1_I}{|I|} \otimes [M_{\D}(u_I)]^2\Big)^{1/2},
$$
and therefore
\begin{align*}
\|s_A\|_1 &\le  \Big( \sum_J \|M_{\D}(g_J)\|_2^2 \Big)^{1/2}  \Big( \sum_I \|M_{\D}(u_I)\|_2^2 \Big)^{1/2}
&\lesssim \Big( \sum_J \|g_J\|_2^2 \Big)^{1/2}  \Big( \sum_I \|u_I\|_2^2 \Big)^{1/2}.
\end{align*} \end{proof}

The example below portraits a sequence $(\lambda_{IJ})_{IJ}$, which is in product BMO but fails to satisfy the $M_{\lambda}$ condition. In light of the previous proposition, this implies that the $M_{\lambda}$ condition is \textbf{not} necessary for the (conditional or unconditional) boundedness of $P_{\lambda}$.
\begin{exmp}
Define 
\begin{displaymath} \lambda_{[0,1] \times [0,2^{-j}]} := 2^{-j/2}, \qquad j \in \N, \end{displaymath}
and $\lambda_{I \times J} = 0$, if $I \times J$ is not of the form $[0,1] \times [0,2^{-j}]$. Then the product BMO condition is satisfied: indeed, if $\Omega \subset \R^{2}$ is an open set, and $[0,1] \times [0,2^{-k}]$ is the largest rectangle of the form $[0,1] \times [0,2^{-j}]$ contained in $\Omega$, then
\begin{displaymath} \sum_{R \subset \Omega} \lambda_{R}^{2} \lesssim \lambda_{[0,1] \times [0,2^{-k}]}^{2} = 2^{-k} = [0,1] \times [0,2^{-k}] \leq |\Omega|. \end{displaymath}

On the other hand, $\|M_{\lambda}\| = \infty$. To see this, we pick a sequence of functions $a = (a_{i})_{i \in \Z} \colon \R^{2} \to \R$ such that $a_{i} \equiv 0$ for $i \ne  0$, and, say,
\begin{displaymath} a_{0}(x,y) = \frac{1_{[0,1]^{2}}(x,y)}{\sqrt{|y|} \cdot (1 - \ln |y|)}. \end{displaymath}
Then, we have that
\begin{align*} \|M_{\lambda}a\|_{\ell^{2}_{\Z}(L^{2})}^{2} & = \iint \sum_{j \in \Z} \Big( \sum_{i \in \Z} 2^{(i + j)/2}\lambda_{I_{i}(x)J_{j}(y)}a_{i}(x,y) \Big)^{2} \, dx \, dy\\
& = \iint \sum_{j = 0}^{\infty} \Big(2^{j/2}\lambda_{[0,1] \times J_{j}(y)}a_{0}(x,y) \Big)^{2} \, dx \, dy\\
& = \int_{0}^{1} \sum_{j = 0}^{\infty} 2^{j}\lambda_{[0,1] \times J_{j}(y)}^{2}\frac{1}{y \cdot (1 - \ln y)^{2}} dy =: \int_{0}^{1} \frac{c(y) \, dy}{y \cdot (1 - \ln y)^{2}}.  \end{align*} 
For $y \in [0,1)$, the quantity $c(y)$ counts the number of rectangles $[0,1] \times [0,2^{-j}]$ containing $y$, so that $c(y) \sim 1-\ln y$. Consequently,
\begin{displaymath} \|M_{\lambda} a\|_{\ell_{\Z}^{2}(L^{2})}^{2} \sim \int_{0}^{1} \frac{dy}{y \cdot (1 - \ln y)} = \infty, \end{displaymath}
even though $\|a\|_{\ell^{2}_{\Z}(L^{2})} = \|a_{0}\|_{2} < \infty$.
\end{exmp}

\subsection{Necessary conditions for boundedness}

So far, we have only seen examples of conditions, which the $L^{2}$ boundedness of $P_{\lambda}$ does \textbf{not} imply.
Let us balance the scales a bit by recording that $\|P_{\lambda}\|_{2 \to 2} < \infty$ is still a fairly strong requirement for the sequence $\lambda$. 
\begin{prop}\label{onePar} The following BMO condition holds
\begin{equation}\label{I0}
\sup_{I_0 \in \D} \sup_{J_0 \in \D} \Big( \frac{1}{|I_0| |J_0|} \sum_{J \subset J_{0}} \lambda_{I_{0}J}^{2} \Big)^{1/2} \le \|\lambda\|_X.
\end{equation}
The symmetric condition also holds.
\end{prop}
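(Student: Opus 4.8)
The plan is to test the supremum defining $\|\lambda\|_X$ against an explicit, essentially optimal pair of sequences $(u_I)$, $(g_J)$ that concentrates all of its mass on the single row $I = I_0$ and on the dyadic subintervals $J \subset J_0$.

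To sidestep convergence issues I would first fix $I_0, J_0 \in \D$ together with a \emph{finite} family $\mathcal{J}$ of dyadic intervals contained in $J_0$, and set $S := \sum_{J \in \mathcal{J}} \lambda_{I_0 J}^2$, which may be assumed positive. Then take the test sequences $u_I := 0$ for $I \ne I_0$, $u_{I_0} := |J_0|^{-1/2} 1_{J_0}$, and $g_J := 0$ for $J \notin \mathcal{J}$, $g_J := S^{-1/2}|I_0|^{-1/2}\lambda_{I_0 J}\, 1_{I_0}$ for $J \in \mathcal{J}$. One checks immediately that these are finitely supported with $\sum_I \|u_I\|_2^2 = 1$ and $\sum_J \|g_J\|_2^2 = S^{-1}\sum_{J \in \mathcal{J}}\lambda_{I_0 J}^2 = 1$, and that $\ave{u_{I_0}}_J = |J_0|^{-1/2}$ for every $J \subset J_0$ while $\ave{g_J}_{I_0} = S^{-1/2}|I_0|^{-1/2}\lambda_{I_0 J}$.

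Since $u_I$ vanishes unless $I = I_0$ and $g_J$ vanishes unless $J \in \mathcal{J}$ (so in particular $J \subset J_0$), substituting into the defining bilinear form collapses it to a single sum:
\begin{displaymath}
\Big|\sum_{I, J} \lambda_{IJ}\ave{g_J}_I \ave{u_I}_J\Big| = \Big|\sum_{J \in \mathcal{J}}\lambda_{I_0 J}\cdot S^{-1/2}|I_0|^{-1/2}\lambda_{I_0 J}\cdot|J_0|^{-1/2}\Big| = \frac{S^{1/2}}{|I_0|^{1/2}|J_0|^{1/2}} = \Big(\frac{1}{|I_0||J_0|}\sum_{J \in \mathcal{J}}\lambda_{I_0 J}^2\Big)^{1/2}.
\end{displaymath}
This last quantity is therefore $\le \|\lambda\|_X$, and letting $\mathcal{J}$ run over all finite subfamilies of $\{J \in \D : J \subset J_0\}$ and then taking the supremum over $I_0, J_0 \in \D$ gives \eqref{I0}. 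For the symmetric condition I would use the mirror-image construction: fix $I_0, J_0$ and a finite family $\mathcal{I}$ of dyadic subintervals of $I_0$, put $g_J := 0$ for $J \ne J_0$, $g_{J_0} := |I_0|^{-1/2}1_{I_0}$, and $u_I := 0$ for $I \notin \mathcal{I}$, $u_I := T^{-1/2}|J_0|^{-1/2}\lambda_{I J_0}\, 1_{J_0}$ for $I \in \mathcal{I}$ with $T := \sum_{I \in \mathcal{I}}\lambda_{I J_0}^2$; the same computation yields $\big(\tfrac{1}{|I_0||J_0|}\sum_{I \in \mathcal{I}}\lambda_{I J_0}^2\big)^{1/2} \le \|\lambda\|_X$.

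There is no genuinely hard step here. The two points that need attention are the finite-support requirement built into the definition of $\|\lambda\|_X$ — which is exactly why one truncates to $\mathcal{J}$ (resp.\ $\mathcal{I}$) before passing to the supremum, so that the bound remains valid even when $\sum_{J \subset J_0}\lambda_{I_0 J}^2 = \infty$ — and the bookkeeping of the two separate $L^2$ normalisations. The test functions above are precisely those that make both normalisation constraints tight while aligning $\ave{g_J}_{I_0}$ with $\lambda_{I_0 J}$, which is what produces the clean constant $1$ in \eqref{I0} rather than merely a $\lesssim$.
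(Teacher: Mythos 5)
Your proof is correct and is essentially the paper's own argument: test the bilinear form against functions supported on the single row $I = I_0$ and on subintervals of $J_0$, with $g_J$ aligned with $\lambda_{I_0 J}$. The only difference is that you normalise the test sequences and truncate to a finite family $\mathcal{J}$ — the latter being a welcome extra bit of care, since the paper's choice $g_J = \lambda_{I_0 J} 1_{I_0} 1_{J \subset J_0}$ is supported on infinitely many $J$ and strictly speaking needs the same truncation to fit the definition of $\|\lambda\|_X$.
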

\begin{proof}
Simply choosing $u_{I}(x) = 1_{J_{0}}(x)1_{I = I_{0}}(I)$ and $g_{J}(x) = \lambda_{I_{0}J}1_{I_{0}}(x)1_{J \subset J_{0}}(J)$ we see that
\begin{displaymath}
\sum_{J \subset J_{0}} \lambda_{I_{0} J}^{2} = \sum_{I, J} \lambda_{IJ} \langle g_{J} \rangle_{I}\langle u_{I} \rangle_{J} \le \|\lambda\|_X \Big( \sum_{J \subset J_{0}} |I_{0}|\lambda_{I_{0} J}^{2} \Big)^{1/2}|J_{0}|^{1/2}.
\end{displaymath}
\end{proof}

The finiteness of the left hand side of \eqref{I0} is characterised by the (unconditional) inequality:
\begin{displaymath}
\sum_{I, J} |\lambda_{IJ}\langle g_{J} \rangle_{I} \langle u_{I} \rangle_{J}| \lesssim \Big(\sum_{J} \|g_{J}\|_{2}^2\Big)^{1/2}\Big( \sum_{I} \|u_{I}\|_{2} \Big).
\end{displaymath}
To see the sufficiency of \eqref{I0}, first apply Cauchy-Schwarz in the $J$-summation and then estimate the remaining expressions by the Carleson embedding theorem. In conclusion, the assumption $\|P_{\lambda}\|_{2 \to 2} < \infty$ is not strong enough to imply that the bilinear form $\Gamma_{\lambda}$ (as in Section \ref{mainProofs}) is unconditionally bounded $\ell^{2}(L^{2}) \times \ell^{2}(L^{2}) \to \R$, but it is strong enough to imply that $\Gamma_{\lambda}$ is unconditionally bounded $\ell^{1}(L^{2}) \times \ell^{2}(L^{2}) \to \R$ and $\ell^{2}(L^{2}) \times \ell^{1}(L^{2}) \to \R$.
\subsection*{Acknowledgements}
H.M. would like to thank Michael Lacey and the School of Mathematics of Georgia Tech, where H.M. conducted this research as a visitor, for hospitality.
We thank Michael Lacey for suggesting this theme of study and for useful discussions.

\begin{bibdiv}
\begin{biblist}
\bib{BP}{article}{
   author={Blasco, {\'O}scar},
   author={Pott, Sandra},
   title={Dyadic BMO on the bidisk},
   journal={Rev. Mat. Iberoamericana},
   volume={21},
   date={2005},
   number={2},
   pages={483--510},
}

\bib{CF}{article}{
   author={Chang, Sun-Yung A.},
   author={Fefferman, Robert},
   title={A continuous version of duality of $H\sp{1}$ with BMO on the
   bidisc},
   journal={Ann. of Math. (2)},
   volume={112},
   date={1980},
   number={1},
   pages={179--201},
}

\bib{Fe}{article}{
   author={Fefferman, Robert},
   title={Harmonic analysis on product spaces},
   journal={Ann. of Math. (2)},
   volume={126},
   date={1987},
   number={1},
   pages={109--130},
}

\bib{FS}{article}{
   author={Fefferman, Robert},
   author={Stein, Elias M.},
   title={Singular integrals on product spaces},
   journal={Adv. in Math.},
   volume={45},
   date={1982},
   number={2},
   pages={117--143},
}

\bib{Hy}{article}{
  author={Hyt{\"o}nen, Tuomas},
  title={The sharp weighted bound for general Calder\'on-Zygmund operators},
  journal={Ann. of Math. (2)},
  volume={175},
  date={2012},
  number={3},
  pages={1473--1506},
}

\bib{HM}{article}{
   author={Hyt\"onen, Tuomas},
   author={Martikainen, Henri},
   title= {Non-homogeneous $T1$ theorem for bi-parameter singular integrals},
   journal={Adv. Math., to appear},
   eprint={http://arxiv.org/abs/1209.4473},
,}

\bib{Ma}{article}{
   author={Martikainen, Henri},
   title={Representation of bi-parameter singular integrals by dyadic
   operators},
   journal={Adv. Math.},
   volume={229},
   date={2012},
   number={3},
   pages={1734--1761},
}

\bib{Jo1}{article}{
   author={Journ{\'e}, Jean-Lin},
   title={A covering lemma for product spaces},
   journal={Proc. Amer. Math. Soc.},
   volume={96},
   date={1986},
   number={4},
   pages={593--598},
}

\bib{Jo2}{article}{
   author={Journ{\'e}, Jean-Lin},
   title={Calder\'on-Zygmund operators on product spaces},
   journal={Rev. Mat. Iberoamericana},
   volume={1},
   date={1985},
   number={3},
   pages={55--91},
}

\bib{Ou}{article}{
   author={Ou, Yumeng},
   title = {A $T(b)$ theorem on product spaces},
   journal={Trans. Amer. Math. Soc., to appear},
   eprint={http://arxiv.org/abs/1305.1691},
,}

\bib{PW}{article}{
   author={Pipher, Jill},
   author={Ward, Lesley A.},
   title={BMO from dyadic BMO on the bidisc},
   journal={J. Lond. Math. Soc. (2)},
   volume={77},
   date={2008},
   number={2},
   pages={524--544},
}

\bib{Ta}{article}{
   author={Tao, Terence},
   title={Topics in random matrix theory},
   book={
   	series={Graduate Studies in Mathematics}
   	volume={132},
	publisher={Amer. Math. Soc.},
   },
   date={2012}
}

\bib{Tr}{article}{
   author={Treil, Sergei},
   title={$H\sp 1$ and dyadic $H\sp 1$},
   conference={
      title={Linear and complex analysis},
   },
   book={
      series={Amer. Math. Soc. Transl. Ser. 2},
      volume={226},
      publisher={Amer. Math. Soc., Providence, RI},
   },
   date={2009},
   pages={179--193},
}

\end{biblist}
\end{bibdiv}
\end{document}